\def\acts{\curvearrowright}
\def\var{\varepsilon}
\def\Complex{\mathbb{C}}
\newtheorem{theorem}{Theorem}[section]
\newtheorem{corollary}[theorem]{Corollary}
\newtheorem{lemma}[theorem]{Lemma}
\newtheorem{proposition}[theorem]{Proposition}
\newtheorem*{theorem*}{Theorem}
\newtheorem*{proposition*}{Proposition}
\theoremstyle{definition}
\newtheorem{definition}[theorem]{Definition}
\newtheorem{remark}[theorem]{Remark}
\begin{document}

\title{Compact quantum stabilizer subgroups}
\author{Huichi Huang}
\address{College  of Mathematics and Statistics, Chongqing University, Chongqing, 401332, PR. China}
\email{huanghuichi@cqu.edu.cn}
\keywords{Compact quantum group, stabilizer subgroup}
\subjclass[2010]{Primary: 46L65; Secondary:16W22}
\date{\today}
\begin{abstract}
We generalize the concept of  stabilizer subgroups to compact quantum groups.
\end{abstract}
\maketitle

\section{Introduction}
We introduce the concept of compact quantum stabilizer subgroups, which are generalizations of compact stabilizer subgroups.

This short note was finished during my PhD study. I put it on my personal webpage for several years and finally decide to put it on arxiv with no intent of submission in the near future.

\section{Formulation of definition}

Consider a compact quantum group $\mathcal{G}$ acting on a compact Hausdorff space $X$. Let $\mathcal{H}$  be a subgroup of $\mathcal{G}$ and $I_{\mathcal{H}}$ be the corresponding Woronowicz $C^*$-ideal of $\mathcal{H}$. Use $\pi_{\mathcal{H}}:A\rightarrow A/I_{\mathcal{H}}$ to denote morphism from $\mathcal{H}$ to $\mathcal{G}$, and $\Delta_{\mathcal{H}}$ to denote the coproduct of $\mathcal{H}$.

Use $\alpha_{\mathcal{H}}:B\rightarrow B\otimes (A/I_{\mathcal{H}})$ to denote the induced action of $\mathcal{H}$ on $X$, i.e.,
$$\alpha_{\mathcal{H}}(f)=(id\otimes\pi_{\mathcal{H}})\alpha(f)$$ for $f\in B$.

Let $\mathcal{G}/\mathcal{H}$ to be the {\it quotient space}, which is the dual space of the C*-algebra
$$C(\mathcal{G}/\mathcal{H})=\{a\in A|(\pi_{\mathcal{H}}\otimes id)\Delta(a)=1\otimes a\}.$$

Then $\mathcal{G}\acts \mathcal{G}/\mathcal{H}$ by $\Delta|_{C(\mathcal{G}/\mathcal{H})}$.

\begin{definition}
 Let $Y\subseteq X$. We say that $\mathcal{H}$ \textbf{fixes} $Y$ (under the action $\alpha$) if $$(ev_{y}\otimes id)\alpha_{\mathcal{H}}(f)=f(y)1$$ for all $f\in B$ and $y\in Y$, i.e., $(ev_{y}\otimes id)\alpha(f)-f(y)1$ is in $\ker{\pi_{\mathcal{H}}}$.  If a subgroup $\mathcal{H}_Y$ of $\mathcal{G}$ fixes $Y$ and every subgroup $\mathcal{H}$ of $\mathcal{G}$ fixing $Y$ is a subgroup of $\mathcal{H}_Y$, i.e., there is an epimorphism $\pi_{\mathcal{H},Y}:A/I_Y \rightarrow A/I_{\mathcal{H}}$ such that
$$(\pi_{\mathcal{H},Y}\otimes \pi_{\mathcal{H},Y})\Delta_Y=\Delta_{\mathcal{H}}\pi_{\mathcal{H},Y},$$ then $\mathcal{H}_Y$ is called the {\it stabilizer} of $Y$.
\end{definition}

\begin{remark}
If the counit of $\mathcal{G}$ is bounded, then $\Complex$ is a subgroup fixing any  $Y\subseteq X$.
\end{remark}

Suppose $Y$ is an $\alpha$-invariant subspace of $X$ containing $x$.
\begin{proposition}
$\mathcal{H}$ fixes $x$ under $\alpha$ if and only if $\mathcal{H}$ fixes $x$ under $\alpha_Y$.
\end{proposition}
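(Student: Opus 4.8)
The plan is to reduce the whole statement to a single compatibility between evaluation at $x$ and restriction to $Y$, and then to move the defining condition of ``fixing $x$'' across the restriction map by means of the invariance of $Y$. Write $B=C(X)$ and let $r:B\rightarrow C(Y)$ be the restriction $*$-homomorphism $f\mapsto f|_Y$, which is surjective since $Y$ is a closed subspace of the compact Hausdorff space $X$. The $\alpha$-invariance of $Y$ is precisely the existence of an action $\alpha_Y:C(Y)\rightarrow C(Y)\otimes A$ of $\mathcal{G}$ on $Y$ with the intertwining relation
$$(r\otimes id)\,\alpha=\alpha_Y\, r.$$
Let $ev_x^X:B\rightarrow\Complex$ and $ev_x^Y:C(Y)\rightarrow\Complex$ denote evaluation at $x$ on the two algebras; because $x\in Y$ one has $(f|_Y)(x)=f(x)$, so the key identity is $ev_x^Y\circ r=ev_x^X$, and after tensoring with $id_A$,
$$(ev_x^Y\otimes id)(r\otimes id)=(ev_x^X\otimes id).$$

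For the forward direction I would assume $\mathcal{H}$ fixes $x$ under $\alpha$ and take an arbitrary $g\in C(Y)$. Using surjectivity of $r$, choose $f\in B$ with $r(f)=g$; then $\alpha_Y(g)=(r\otimes id)\alpha(f)$, and applying $ev_x^Y\otimes id$ together with the two displayed identities gives $(ev_x^Y\otimes id)\alpha_Y(g)=(ev_x^X\otimes id)\alpha(f)$. Since $g(x)=f(x)$, the element $(ev_x^Y\otimes id)\alpha_Y(g)-g(x)1$ coincides with $(ev_x^X\otimes id)\alpha(f)-f(x)1$, which lies in $\ker\pi_{\mathcal{H}}$ by hypothesis; hence $\mathcal{H}$ fixes $x$ under $\alpha_Y$.

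The converse is the mirror image and requires no choices: assuming $\mathcal{H}$ fixes $x$ under $\alpha_Y$, I take an arbitrary $f\in B$, set $g=r(f)$, and run the same chain of equalities in reverse to obtain $(ev_x^X\otimes id)\alpha(f)-f(x)1=(ev_x^Y\otimes id)\alpha_Y(g)-g(x)1\in\ker\pi_{\mathcal{H}}$, so $\mathcal{H}$ fixes $x$ under $\alpha$.

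I do not anticipate a genuine obstacle, since the argument is purely formal once the definitions are unwound. The one point that deserves care is the meaning of ``$\alpha_Y$'': I must make sure I am invoking $\alpha$-invariance in the form that $\alpha$ descends through $r$ to a well-defined action on $C(Y)$ satisfying the intertwining relation, for only then are the expressions $(ev_x^Y\otimes id)\alpha_Y(g)$ meaningful. Granting this, each direction is a two-line verification resting on the identity $ev_x^Y\circ r=ev_x^X$.
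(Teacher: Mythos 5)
Your proof is correct and follows essentially the same route as the paper: both rest on the identification $C(Y)\cong C(X)/J_Y$ (your restriction map $r$ is the quotient map) and the key identity that evaluation at $x$ on $C(Y)$ composed with $r$ equals evaluation at $x$ on $B$, which transfers the fixing condition across the intertwining relation defining $\alpha_Y$. Your version is, if anything, slightly more careful than the paper's in making the surjectivity of $r$ and the membership in $\ker\pi_{\mathcal{H}}$ explicit.
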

\begin{proof}
Let $\widetilde{ev_{x}}$ to be evaluation functional of $C(Y)$ at $x$. Note that $C(Y)\cong C(X)/J_Y$.
Hence $(\widetilde{ev_{x}}\otimes id)\alpha_Y(f+J_Y)=(ev_{x}\otimes id)\alpha(f)$ for all $f\in B$. It follows that
$(\widetilde{ev_{x}}\otimes id)\alpha_Y(f+J_Y)=\widetilde{ev_{x}}(f+J_Y)1$ if and only if $(ev_{x}\otimes id)\alpha(f)=f(x)1$ for all $f\in B$, which completes the proof.
\end{proof}

\begin{proposition}\label{mx is a quotient of quotient space}
If $\mathcal{H}$ fixes $Y$, then $(ev_{x}\otimes id)\alpha(B)\subseteq C(\mathcal{G}/\mathcal{H})$ for all $x\in Y$.
\end{proposition}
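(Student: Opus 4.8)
The plan is to verify directly that $a := (ev_{x} \otimes id)\alpha(f)$ satisfies the defining equation $(\pi_{\mathcal{H}} \otimes id)\Delta(a) = 1 \otimes a$ of the relative commutant $C(\mathcal{G}/\mathcal{H})$, for an arbitrary $f \in B$ and $x \in Y$. Since $a$ is obtained by slicing $\alpha(f) \in B \otimes A$ in its first leg, it lies in $A$, so $\Delta(a)$ is defined and the membership condition is exactly what must be checked.

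First I would rewrite $\Delta(a)$ using the action axiom (coassociativity) $(\alpha \otimes id)\alpha = (id \otimes \Delta)\alpha$. Slicing the first leg at $x$ on both sides, and using that $ev_{x} \otimes id$ commutes with $id \otimes \Delta$ because they act on different legs, one obtains $\Delta(a) = (ev_{x} \otimes id \otimes id)(\alpha \otimes id)\alpha(f)$; that is, $\Delta$ of the $x$-slice of $\alpha(f)$ equals the first-leg $x$-slice of $(\alpha \otimes id)\alpha(f)$.

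Next I would apply $\pi_{\mathcal{H}} \otimes id$ to this identity. The clean way to organize the bookkeeping is to group the first two legs and introduce the single map $T := (ev_{x} \otimes \pi_{\mathcal{H}})\alpha : B \to A/I_{\mathcal{H}}$. Commuting the slice maps past amplification by $id$ then gives $(\pi_{\mathcal{H}} \otimes id)\Delta(a) = (T \otimes id)\alpha(f)$. This is the only step that requires a little care: since $\alpha(f)$ need not be a finite sum of elementary tensors, I would phrase the interchange entirely through slice maps and their (standard) commutation with $\otimes id$, rather than through explicit Sweedler sums.

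Finally I would invoke the hypothesis. That $\mathcal{H}$ fixes $Y$ with $x \in Y$ says precisely $(ev_{x} \otimes id)\alpha_{\mathcal{H}}(g) = g(x)1$ for all $g \in B$; unwinding $\alpha_{\mathcal{H}} = (id \otimes \pi_{\mathcal{H}})\alpha$, this is exactly $T(g) = g(x)1$, i.e. $T$ is the map $g \mapsto g(x)1$. Substituting this form of $T$ into $(T \otimes id)\alpha(f)$ yields $1 \otimes (ev_{x} \otimes id)\alpha(f) = 1 \otimes a$, completing the verification. I do not expect a genuine obstacle; the only thing to watch is keeping track of which leg each slice map acts on, together with the routine justification of the slice-map interchange noted above.
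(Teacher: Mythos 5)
Your proposal is correct and follows essentially the same route as the paper's own proof: rewrite $\Delta((ev_x\otimes id)\alpha(f))$ via the action axiom $(\alpha\otimes id)\alpha=(id\otimes\Delta)\alpha$, apply $\pi_{\mathcal{H}}\otimes id$, group the first two legs into the map $(ev_x\otimes\pi_{\mathcal{H}})\alpha$, and use the fixing hypothesis to replace it by $g\mapsto g(x)1$. Your extra remark about justifying the slice-map interchange without elementary tensors is a point of rigor the paper leaves implicit, but it does not change the argument.
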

\begin{proof}
Take any $f\in B$. We have
$$\Delta((ev_{x}\otimes id)\alpha(f))=(ev_{x}\otimes id\otimes id)(\alpha\otimes id)\alpha(f).$$
Applying $\pi_{\mathcal{H}}\otimes id$ on both sides of the above identity, we have
$$(\pi_{\mathcal{H}}\otimes id)\Delta((ev_{x}\otimes id)\alpha(f))=(((ev_{x_0}\otimes \pi_{\mathcal{H}})\alpha)\otimes id)\alpha(f).$$
Since $\mathcal{H}$ fixes $x$, we have  $(ev_{x}\otimes \pi_{\mathcal{H}})\alpha(\cdot)=ev_{x}(\cdot)1$.
This implies
$$(\pi_{\mathcal{H}}\otimes id)\Delta((ev_{x}\otimes id)\alpha(f))=((ev_{x}\otimes \pi_{\mathcal{H}})\alpha\otimes id)\alpha(f)=1\otimes ((ev_{x}\otimes id)\alpha(f)).$$
Therefore $(ev_{x}\otimes id)\alpha(f)\in C(\mathcal{G}/\mathcal{H})$ for all $f\in B$.
\end{proof}

\begin{corollary}\label{fixfor}
If $\mathcal{H}$ fixes $x$, then $(ev_{x}\otimes id)\alpha: B\rightarrow C(\mathcal{G}/\mathcal{H})$ is equivariant.
\end{corollary}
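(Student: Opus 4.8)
The plan is to read off equivariance directly from the computation already carried out in the proof of Proposition~\ref{mx is a quotient of quotient space}. Write $\Phi:=(ev_{x}\otimes id)\alpha$. Since $\mathcal{H}$ fixes $x$, Proposition~\ref{mx is a quotient of quotient space} (applied with $Y=\{x\}$) guarantees that $\Phi$ maps $B$ into the invariant subalgebra $C(\mathcal{G}/\mathcal{H})$, and $\Phi$ is a $*$-homomorphism because it is the composite of the character-tensored morphism $ev_{x}\otimes id$ with the action $\alpha$. By definition, the action of $\mathcal{G}$ on $\mathcal{G}/\mathcal{H}$ is $\Delta|_{C(\mathcal{G}/\mathcal{H})}$, so to say that $\Phi$ is equivariant is to say that $\Delta\circ\Phi=(\Phi\otimes id)\circ\alpha$ on $B$.

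First I would unwind the right-hand side: for $f\in B$,
$$(\Phi\otimes id)\alpha(f)=\big((ev_{x}\otimes id)\alpha\otimes id\big)\alpha(f)=(ev_{x}\otimes id\otimes id)(\alpha\otimes id)\alpha(f).$$
But this is exactly the expression appearing on the right of the first displayed identity in the proof of Proposition~\ref{mx is a quotient of quotient space}, whose left-hand side is $\Delta\big((ev_{x}\otimes id)\alpha(f)\big)=\Delta(\Phi(f))$. Hence $\Delta(\Phi(f))=(\Phi\otimes id)\alpha(f)$, which is precisely the desired intertwining relation. The single ingredient making the two sides coincide is the coaction axiom $(\alpha\otimes id)\alpha=(id\otimes\Delta)\alpha$ together with coassociativity of $\Delta$, both of which are already in play there.

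I do not expect any genuine obstacle: the content is entirely bookkeeping about which coaction sits on the target. It is worth flagging, though, that the hypothesis that $\mathcal{H}$ fixes $x$ plays no role in the intertwining identity itself—indeed the assignment $x\mapsto(ev_{x}\otimes id)\alpha$ always intertwines $\alpha$ with $\Delta$ for every $x\in X$; the fixing hypothesis is used solely to ensure, via Proposition~\ref{mx is a quotient of quotient space}, that the image lands in $C(\mathcal{G}/\mathcal{H})$, so that the codomain carries the quotient-space action and the word ``equivariant'' becomes meaningful. The only point deserving a line of care is to confirm that $\Delta|_{C(\mathcal{G}/\mathcal{H})}$ really is an action on the quotient, i.e. that $\Delta$ maps $C(\mathcal{G}/\mathcal{H})$ into $C(\mathcal{G}/\mathcal{H})\otimes A$; this follows from coassociativity and the defining invariance of elements of $C(\mathcal{G}/\mathcal{H})$, and is implicit in the statement ``$\mathcal{G}\acts\mathcal{G}/\mathcal{H}$ by $\Delta|_{C(\mathcal{G}/\mathcal{H})}$'' recorded before the definition of the stabilizer.
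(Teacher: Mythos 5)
Your proposal is correct and is essentially the paper's own argument: the paper's one-line proof likewise reads off the intertwining identity $\Delta|_{C(\mathcal{G}/\mathcal{H})}\circ(ev_{x}\otimes id)\alpha=[((ev_{x}\otimes id)\alpha)\otimes id]\alpha$ from the coaction axiom $(\alpha\otimes id)\alpha=(id\otimes\Delta)\alpha$, with Proposition~\ref{mx is a quotient of quotient space} (via the fixing hypothesis) guaranteeing that the map lands in $C(\mathcal{G}/\mathcal{H})$. Your additional remarks—that the fixing hypothesis enters only through the codomain claim, and that $\Delta$ restricts to a coaction on $C(\mathcal{G}/\mathcal{H})$—are accurate glosses on points the paper leaves implicit.
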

\begin{proof}
It follows from $(\alpha\otimes id)\alpha=(id\otimes \Delta)\alpha$ that
$$\Delta|_{C(\mathcal{G}/\mathcal{H})}(ev_{x}\otimes id)\alpha=[((ev_{x}\otimes id)\alpha)\otimes id]\alpha.$$
\end{proof}

\begin{lemma}\label{fixfor}
 Let $x\in X$. If $\pi$ is a $*$-homomorphism from $A$ to a unital $C^*$-algebra $A'$ such that $\pi((ev_{x}\otimes id)\alpha(f))=f(x)1$ for all $f\in B$, then
\begin{equation}\label{eq:ideal}
(\pi\otimes \pi)\Delta((ev_{x}\otimes id)\alpha(f))=f(x)1\otimes 1
\end{equation}
for all $f\in B$.
\end{lemma}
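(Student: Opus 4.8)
The plan is to reduce the statement to the coassociativity of the action together with two applications of the hypothesis on $\pi$. Throughout, write $m_x := (ev_{x}\otimes id)\alpha\colon B\to A$, which is a $*$-homomorphism because $ev_x$ is a character on $B$; in this notation the hypothesis reads $\pi\circ m_x(f)=f(x)1$ for all $f\in B$, and the desired conclusion reads $(\pi\otimes\pi)\Delta\, m_x(f)=f(x)\,1\otimes 1$. The one genuine ingredient is a structural equivariance identity for $m_x$; once that is in hand the rest is purely algebraic.

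First I would record the identity
$$\Delta\circ m_x=(m_x\otimes id)\circ\alpha.$$
This needs no hypothesis on $\pi$ and no reference to any subgroup: it is exactly the computation already carried out in Proposition~\ref{mx is a quotient of quotient space}, where applying $\Delta$ to $m_x(f)$ and invoking coassociativity $(\alpha\otimes id)\alpha=(id\otimes\Delta)\alpha$ gives $\Delta\, m_x(f)=(ev_{x}\otimes id\otimes id)(\alpha\otimes id)\alpha(f)=(m_x\otimes id)\alpha(f)$. Applying $\pi\otimes\pi$ to both sides then yields
$$(\pi\otimes\pi)\Delta\, m_x(f)=(\pi\otimes\pi)(m_x\otimes id)\alpha(f)=(\pi m_x\otimes\pi)\alpha(f).$$

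Finally I would use the hypothesis twice. On the first leg, $\pi m_x=ev_x(\cdot)\,1$ factors through $\Complex$, so the first tensor factor collapses to a scalar and
$$(\pi m_x\otimes\pi)\alpha(f)=1\otimes(ev_{x}\otimes\pi)\alpha(f).$$
But $(ev_{x}\otimes\pi)\alpha(f)=\pi\big((ev_{x}\otimes id)\alpha(f)\big)=\pi(m_x(f))=f(x)1$ by the hypothesis again, so the right-hand side equals $f(x)\,1\otimes 1$, as claimed. The only point requiring care — and the step I expect to be the main obstacle in getting the bookkeeping right rather than in any deep sense — is to notice that after pulling the scalar $ev_x$ out on the first leg, what survives on the second leg is again of the form $(ev_{x}\otimes\pi)\alpha(f)=\pi(m_x(f))$, so that the fixing hypothesis applies a second time. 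Beyond that there is no analytic difficulty: the equivariance identity is formal and the two reductions are slice-map manipulations of $*$-homomorphisms.
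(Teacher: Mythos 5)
Your proof is correct and is essentially the paper's own argument: the paper also reduces to the identity $\Delta\,(ev_{x}\otimes id)\alpha=(((ev_{x}\otimes id)\alpha)\otimes id)\alpha$ established in Proposition~\ref{mx is a quotient of quotient space}, applies the fixing hypothesis on the first leg to get $(\pi\otimes id)\Delta((ev_{x}\otimes id)\alpha(f))=1\otimes (ev_{x}\otimes id)\alpha(f)$, and then applies $id\otimes\pi$ and the hypothesis a second time on the second leg. The only cosmetic difference is that you apply $\pi\otimes\pi$ in a single step, which incidentally makes clear that only the computation inside that Proposition (not its statement for subgroup quotient maps $\pi_{\mathcal{H}}$) is what the Lemma's arbitrary $*$-homomorphism $\pi$ requires.
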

\begin{proof}
By Proposition~\ref{mx is a quotient of quotient space}, we have
$$(\pi\otimes id)\Delta((ev_{x}\otimes id)\alpha(f))=1\otimes (ev_{x}\otimes id)\alpha(f).$$
Apply $id\otimes\pi$ on both sides of the above equation, we get
\begin{align*}
(\pi\otimes \pi)\Delta((ev_{x}\otimes id)\alpha(f))
=1\otimes (ev_{x}\otimes \pi)\alpha(f)
=f(x)1\otimes 1.
\end{align*}
\end{proof}

Suppose $\mathcal{G}$ has the bounded counit $\var$. Let $I_Y$ be the ideal of $A$ generated by elements of the form
$(ev_{x}\otimes id)\alpha(f)-f(x)1$ for all $x\in Y$ and $\pi_Y: A\rightarrow A/I_Y$ be the quotient map. Note that $(ev_{x}\otimes id)\alpha(f)-f(x)1$ is in $\ker{\var}$. Hence $I_Y\subseteq \ker{\var}$ is a proper ideal of $A$.

Under the assumption of boundedness of counit,  we show the existence of stabilizer subgroups.
\begin{lemma}
$I_0$ is a Woronowicz C*-ideal.
\end{lemma}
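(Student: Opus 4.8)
The plan is to check the two conditions in the definition of a Woronowicz C*-ideal: that $I_Y$ is a closed two-sided $*$-ideal of $A$, and that $\Delta(I_Y)\subseteq\ker(\pi_Y\otimes\pi_Y)$, where $\pi_Y\otimes\pi_Y$ denotes the induced $*$-homomorphism on the minimal tensor product $A\otimes A$. The first condition is almost built into the construction, since $I_Y$ is defined as the closed ideal generated by the elements $g_{x,f}:=(ev_{x}\otimes id)\alpha(f)-f(x)1$ with $x\in Y$ and $f\in B$; the only thing to verify is that this generating family is stable under the adjoint, which holds because $g_{x,f}^{*}=g_{x,f^{*}}$ (using that $\alpha$ and $ev_{x}$ are $*$-homomorphisms and $\overline{f(x)}=f^{*}(x)$). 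Thus $I_Y$ is automatically a $*$-ideal.

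For the second condition, the key observation is that the quotient map $\pi_Y$ itself fits the hypothesis of Lemma~\ref{fixfor}. Indeed, for every $x\in Y$ and $f\in B$ the generator $g_{x,f}$ lies in $I_Y=\ker\pi_Y$, so $\pi_Y((ev_{x}\otimes id)\alpha(f))=f(x)1$. Applying Lemma~\ref{fixfor} with $\pi=\pi_Y$ then yields
\[
(\pi_Y\otimes\pi_Y)\Delta\big((ev_{x}\otimes id)\alpha(f)\big)=f(x)\,1\otimes1 .
\]
Since $\Delta(1)=1\otimes1$ and $\pi_Y(1)=1$, we have $(\pi_Y\otimes\pi_Y)\Delta(1)=1\otimes1$, and subtracting $f(x)$ times this identity gives $(\pi_Y\otimes\pi_Y)\Delta(g_{x,f})=0$. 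Hence $\Delta$ maps every generator of $I_Y$ into $\ker(\pi_Y\otimes\pi_Y)$.

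It remains to pass from the generators to the whole ideal. Here I would use that $\pi_Y\otimes\pi_Y$ is a $*$-homomorphism, so $N:=\ker(\pi_Y\otimes\pi_Y)$ is a closed two-sided ideal of $A\otimes A$, together with the fact that $\Delta$ is a $*$-homomorphism. For any $a,b\in A$ we then have $\Delta(a\,g_{x,f}\,b)=\Delta(a)\,\Delta(g_{x,f})\,\Delta(b)\in N$, because $N$ absorbs multiplication on both sides and $\Delta(g_{x,f})\in N$. Taking finite linear combinations of such elements and then closures---legitimate because $N$ is closed and $\Delta$ is continuous---shows $\Delta(I_Y)\subseteq N$, which is precisely the required condition.

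I do not anticipate a genuine difficulty: all of the analytic content has already been isolated in Lemma~\ref{fixfor}, and what remains is bookkeeping. The one step demanding a little care is the last one, where I must invoke both that $\ker(\pi_Y\otimes\pi_Y)$ is a closed ideal of $A\otimes A$ and the continuity and multiplicativity of $\Delta$ in order to promote the claim from the generating set to the closed ideal it generates; the $*$-closedness of the generators likewise deserves the one line indicated above.
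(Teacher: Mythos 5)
Your proof is correct and follows essentially the same route as the paper: reduce the Woronowicz condition to the generators $(ev_{x}\otimes id)\alpha(f)-f(x)1$ and dispose of those via Lemma~\ref{fixfor} applied to $\pi=\pi_Y$. The paper's proof is just a terser version of yours, leaving implicit the two points you spell out---that $\pi_Y$ satisfies the hypothesis of Lemma~\ref{fixfor} because the generators lie in $\ker\pi_Y$, and that the kernel of $(\pi_Y\otimes\pi_Y)\Delta$ being a closed ideal lets one pass from generators to the whole ideal.
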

\begin{proof}
$I_Y$ is a Woronowicz C*-ideal if and only if $I_Y\subseteq \ker{(\pi_Y\otimes \pi_Y)\Delta}$. So it suffices to show
$$(\pi_Y\otimes \pi_Y)\Delta((ev_{x}\otimes id)\alpha(f)-f(x)1)=0,$$ for all $f\in B$, which
 follows directly from Lemma~\ref{fixfor}.
\end{proof}

Hence $A/I_Y$ is a subgroup of $\mathcal{G}$, denote it by $\mathcal{H}_0$. Use $\Delta_0$ to denote the coproduct of $\mathcal{H}_0$. Then
\begin{lemma}\label{existence of subgroup fixing x}
$\mathcal{H}_0$ fixes $x$.
\end{lemma}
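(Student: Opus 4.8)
The plan is to unwind the definition of ``$\mathcal{H}_0$ fixes $x$'' and observe that the required condition is forced by the very construction of the ideal $I_Y$. Since the morphism attached to $\mathcal{H}_0=A/I_Y$ is the quotient map $\pi_Y$, its induced action is $\alpha_{\mathcal{H}_0}=(id\otimes\pi_Y)\alpha$, and what must be verified is that
$$(ev_{x}\otimes id)\alpha_{\mathcal{H}_0}(f)=f(x)1$$
in $A/I_Y$ for every $f\in B$.

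First I would rewrite the left-hand side by pushing the evaluation functional through the quotient map. Setting $m_{x}(f):=(ev_{x}\otimes id)\alpha(f)\in A$, linearity and continuity of $\pi_Y$ yield the slice-map identity
$$(ev_{x}\otimes id)\alpha_{\mathcal{H}_0}(f)=(ev_{x}\otimes\pi_Y)\alpha(f)=\pi_Y\big((ev_{x}\otimes id)\alpha(f)\big)=\pi_Y(m_{x}(f)).$$
Thus the claim reduces to showing $\pi_Y(m_{x}(f))=f(x)1$, equivalently $m_{x}(f)-f(x)1\in I_Y$.

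The decisive step is then immediate: the element $m_{x}(f)-f(x)1=(ev_{x}\otimes id)\alpha(f)-f(x)1$ is precisely one of the generators of $I_Y$ (taken at the point $x\in Y$), so it lies in $I_Y$ and hence maps to $0$ under $\pi_Y$. Consequently $\pi_Y(m_{x}(f))=f(x)\pi_Y(1)=f(x)1$, which is exactly the desired identity, and the same argument applies verbatim to every point of $Y$.

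I do not anticipate a genuine obstacle here. The only point deserving a word of care is the interchange $(ev_{x}\otimes id)(id\otimes\pi_Y)=(ev_{x}\otimes\pi_Y)=\pi_Y\circ(ev_{x}\otimes id)$ on the algebraic tensor product (extended by continuity), a routine property of slice maps. The real content of the lemma is simply that $I_Y$ was defined so as to contain exactly the obstructions to fixing the points of $Y$; passing to the quotient therefore annihilates those obstructions by design.
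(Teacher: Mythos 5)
Your proof is correct and is essentially the paper's own argument: the paper's proof likewise observes that by the very definition of $I_Y$ (whose generators include $(ev_x\otimes id)\alpha(f)-f(x)1$), one has $\pi_Y((ev_x\otimes id)\alpha(f))=f(x)1$ for all $f\in B$, which is exactly the condition that $\mathcal{H}_0$ fixes $x$. You merely spell out the routine slice-map identity that the paper leaves implicit.
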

\begin{proof}
From the definition of $\mathcal{H}_0$, $\pi_Y((ev_{x}\otimes id)\alpha(f))=f(x)1$ for all $f\in B$. So $\mathcal{H}_0$ fixes $x$.
\end{proof}

\begin{theorem}\label{stabilizer subgroup}
$\mathcal{H}_0$ is the stabilizer of $x$.
\end{theorem}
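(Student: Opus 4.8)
The plan is to verify directly the two defining properties of the stabilizer from the Definition. The first property, that $\mathcal{H}_0$ fixes $x$, is exactly the content of Lemma~\ref{existence of subgroup fixing x}, so the real work lies in the second property: every subgroup $\mathcal{H}$ of $\mathcal{G}$ fixing $x$ must be a subgroup of $\mathcal{H}_0$, witnessed by an epimorphism $\pi_{\mathcal{H},Y}: A/I_Y \to A/I_{\mathcal{H}}$ intertwining the coproducts. I would therefore organize the proof as a reduction of this maximality property to a single inclusion of ideals.

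First I would take an arbitrary subgroup $\mathcal{H}$ fixing $x$, with Woronowicz ideal $I_{\mathcal{H}} = \ker \pi_{\mathcal{H}}$. By the definition of \emph{fixes}, $\pi_{\mathcal{H}}((ev_{x}\otimes id)\alpha(f)) = f(x)1$ for every $f \in B$, which says precisely that each canonical generator $(ev_{x}\otimes id)\alpha(f) - f(x)1$ of $I_Y$ lies in $\ker \pi_{\mathcal{H}} = I_{\mathcal{H}}$. Since $I_{\mathcal{H}}$ is a closed two-sided ideal, and $I_Y$ is by construction the closed two-sided ideal generated by those elements, this yields the key inclusion $I_Y \subseteq I_{\mathcal{H}}$.

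With the inclusion in hand the epimorphism is forced: I would define $\pi_{\mathcal{H},Y}: A/I_Y \to A/I_{\mathcal{H}}$ by $a + I_Y \mapsto a + I_{\mathcal{H}}$, which is well defined precisely because $I_Y \subseteq I_{\mathcal{H}}$, is surjective because $\pi_{\mathcal{H}}$ is, and satisfies $\pi_{\mathcal{H},Y} \circ \pi_Y = \pi_{\mathcal{H}}$. The required intertwining $(\pi_{\mathcal{H},Y}\otimes \pi_{\mathcal{H},Y})\Delta_Y = \Delta_{\mathcal{H}}\pi_{\mathcal{H},Y}$ then follows by evaluating both sides on a general element $\pi_Y(a)$ and invoking that the induced coproducts satisfy $\Delta_Y \pi_Y = (\pi_Y \otimes \pi_Y)\Delta$ and $\Delta_{\mathcal{H}}\pi_{\mathcal{H}} = (\pi_{\mathcal{H}}\otimes \pi_{\mathcal{H}})\Delta$; both sides then collapse to $(\pi_{\mathcal{H}}\otimes \pi_{\mathcal{H}})\Delta(a)$, so they agree.

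The argument is essentially formal once the ideal inclusion is established, so I do not expect a substantial analytic obstacle. The one point deserving genuine care is that the \emph{fixes} condition controls only the canonical generators of $I_Y$; promoting this to the full inclusion $I_Y \subseteq I_{\mathcal{H}}$ relies on $I_{\mathcal{H}}$ being a bona fide closed two-sided ideal, so that it absorbs products and norm limits, and on reading $I_Y$ as the closed ideal generated by the elements $(ev_{x}\otimes id)\alpha(f) - f(x)1$. I expect this to be the crux of the write-up, with the coproduct compatibility reduced to a routine diagram chase using the defining property of the induced coproducts.
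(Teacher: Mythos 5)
Your proposal is correct and follows essentially the same route as the paper: the fixing condition places the generators of $I_Y$ in $I_{\mathcal{H}}$, the closed-ideal property gives $I_Y\subseteq I_{\mathcal{H}}$, the quotient map $\pi_{\mathcal{H},Y}$ satisfying $\pi_{\mathcal{H},Y}\pi_Y=\pi_{\mathcal{H}}$ follows, and the coproduct intertwining collapses both sides to $(\pi_{\mathcal{H}}\otimes\pi_{\mathcal{H}})\Delta(a)$. Your explicit attention to why the generator-level condition upgrades to the full ideal inclusion is a point the paper passes over silently, but it is the same argument.
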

\begin{proof}
 Suppose a subgroup $\mathcal{H}$ of $\mathcal{G}$ fixes $x$. Then $(ev_{x}\otimes id)\alpha(f)-f(x)1$ is in $\ker{\pi_{\mathcal{H}}}$. Hence $I_Y\subseteq I_{\mathcal{H}}$, which proves the existence of $\pi_{\mathcal{H},0}:A/I_Y\to A/I_{\mathcal{H}}$ and that $\pi_{\mathcal{H},0}\pi_Y=\pi_{\mathcal{H}}$.

For $a\in A$, we have
\begin{equation*}
\begin{split}
&(\pi_{\mathcal{H},0}\otimes \pi_{\mathcal{H},0})\Delta_0(a+I_Y)   \\
&=(\pi_{\mathcal{H},0}\otimes \pi_{\mathcal{H},0})(\pi_Y\otimes\pi_Y)\Delta(a)   \notag \\
&=(\pi_{\mathcal{H}}\otimes \pi_{\mathcal{H}})\Delta(a)                    \notag \\
&=\Delta_{\mathcal{H}}(a+I_{\mathcal{H}})                 \notag \\
&=\Delta_{\mathcal{H}}\pi_{\mathcal{H},0}(a+I_Y)
\end{split}
\end{equation*}
\end{proof}

\begin{lemma}
Suppose a subgroup $\mathcal{H}$ of $\mathcal{G}$ acts on $B$ by $\alpha_{\mathcal{H}}$ and fixes $x$. Under the action of $\mathcal{H}$, ${\rm Orb}_x=\{x\}$.
\end{lemma}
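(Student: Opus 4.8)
The plan is to unwind the definition of the orbit in terms of the image $C^*$-algebra of the ``evaluation-after-action'' map, and then observe that the fixing hypothesis forces this image to collapse to the scalars, whose spectrum is a single point that must be $x$ itself. First I would record that for the action $\alpha_{\mathcal{H}}$ the relevant object is the map
$$\phi_x:B\rightarrow A/I_{\mathcal{H}},\qquad \phi_x(f)=(ev_x\otimes id)\alpha_{\mathcal{H}}(f),$$
which is a $*$-homomorphism since both $\alpha_{\mathcal{H}}$ and $ev_x\otimes id$ are. Its image $m_x:=(ev_x\otimes id)\alpha_{\mathcal{H}}(B)$ is a commutative unital $C^*$-subalgebra of $A/I_{\mathcal{H}}$, and (in line with Proposition~\ref{mx is a quotient of quotient space}, where the analogous image sits inside $C(\mathcal{G}/\mathcal{H})$) the orbit ${\rm Orb}_x$ is the Gelfand spectrum of $m_x$, realized as the closed subset of $X$ cut out by $\ker\phi_x$ via the surjection $B\twoheadrightarrow m_x$.

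Next I would invoke the hypothesis that $\mathcal{H}$ fixes $x$. By definition this means $(ev_x\otimes id)\alpha_{\mathcal{H}}(f)=f(x)1$ for all $f\in B$; equivalently, unwinding $\alpha_{\mathcal{H}}=(id\otimes\pi_{\mathcal{H}})\alpha$, one has
$$\phi_x(f)=(ev_x\otimes\pi_{\mathcal{H}})\alpha(f)=f(x)1.$$
Hence $m_x=\mathbb{C}1\subseteq A/I_{\mathcal{H}}$, a one-dimensional algebra, so its spectrum consists of a single point. Moreover $\phi_x$ is nothing but the character $f\mapsto f(x)=ev_x(f)$, whose kernel is the maximal ideal $\{f\in B: f(x)=0\}$. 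The closed subset of $X$ corresponding to this kernel is exactly $\{x\}$, and therefore ${\rm Orb}_x=\{x\}$.

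I do not expect a genuine obstacle here: the content is purely the identification of the fixing condition with the collapse of the orbit algebra to $\mathbb{C}1$. The only point demanding a moment's care is the bookkeeping that identifies the one-point spectrum of $m_x=\mathbb{C}1$ with the \emph{correct} point of $X$ rather than some other evaluation; this is settled immediately by the equality $\phi_x(f)=f(x)1$, which shows the quotient character is precisely $ev_x$. If one instead prefers to define ${\rm Orb}_x$ through the support of the pushforward measure $(ev_x\otimes h_{\mathcal{H}})\alpha_{\mathcal{H}}$ for the Haar state $h_{\mathcal{H}}$ of $\mathcal{H}$, the same computation gives $(ev_x\otimes h_{\mathcal{H}})\alpha_{\mathcal{H}}(f)=h_{\mathcal{H}}(f(x)1)=f(x)$, so that measure is $\delta_x$ and the conclusion ${\rm Orb}_x=\{x\}$ is unchanged.
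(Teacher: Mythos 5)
Your proof rests on a definition of ${\rm Orb}_x$ that is not the one the paper uses, and under the paper's definition your argument has a genuine gap. As the paper's own proof reveals, the orbit here is the equivalence class
$${\rm Orb}_x=\{y\in X : (ev_y\otimes h)\alpha_{\mathcal{H}}=(ev_x\otimes h)\alpha_{\mathcal{H}}\},$$
where $h$ is the Haar state of $\mathcal{H}$: two points lie in the same orbit exactly when they push the Haar state forward to the same measure on $X$. Both of your readings --- orbit as the Gelfand spectrum of $m_x=(ev_x\otimes id)\alpha_{\mathcal{H}}(B)$ (equivalently the hull of $\ker\phi_x$), and orbit as the support of the single measure $(ev_x\otimes h)\alpha_{\mathcal{H}}$ --- involve evaluation only at the point $x$, so once the fixing hypothesis is unwound they make the lemma true essentially by definition. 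That trivialization is the tell: under the paper's definition the substantive half of the statement is precisely the part you never address, namely that \emph{no other point} $y\neq x$ satisfies $(ev_y\otimes h)\alpha_{\mathcal{H}}=(ev_x\otimes h)\alpha_{\mathcal{H}}$. Your computation $(ev_x\otimes h)\alpha_{\mathcal{H}}(f)=f(x)$ is the easy half; it says nothing about the functionals $(ev_y\otimes h)\alpha_{\mathcal{H}}$ for $y\neq x$.

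The missing step is where the paper spends its effort, and it requires an analytic ingredient absent from your proposal: faithfulness of the Haar state on the canonical dense Hopf $*$-subalgebra (the Haar state need not be faithful on the full $C^*$-algebra). The paper argues by contradiction: if $y\in{\rm Orb}_x$ with $y\neq x$, choose $0\leq f\in\mathcal{B}$ (the algebraic core, which the action preserves) with $f(x)=0$ and $f(y)>0$ (the printed proof swaps these two values --- a typo; this is the only consistent reading). The fixing hypothesis gives $(ev_x\otimes h)\alpha_{\mathcal{H}}(f)=h(f(x)1)=0$, while $(ev_y\otimes id)\alpha_{\mathcal{H}}(f)$ is a positive element of the core which is nonzero (applying the counit to it returns $f(y)>0$), so faithfulness of $h$ on the core forces $(ev_y\otimes h)\alpha_{\mathcal{H}}(f)>0$, contradicting $y\in{\rm Orb}_x$. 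To repair your proposal you would either have to prove that your spectrum/support definition agrees with the paper's equivalence-class definition --- which is essentially the content of the lemma itself --- or supply a faithfulness argument of this kind.
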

\begin{proof}
Suppose there exists a $y\in \rm {Orb}_x$ other than $x$. Since $\mathcal{B}$ is a dense $*$-subalgebra of $B$, there exists $0\leq f\in\mathcal{B}$ such that $f(y)=0$ and $f(x)>0$. So
$(ev_x\otimes h)\alpha_{\mathcal{H}}(f)=h(f(x)1)=f(x)=0$. Moreover it follows from the faithfulness of $h$ in $\mathcal{B}$ and $(ev_y\otimes id)\alpha_{\mathcal{H}}(f))\in\mathcal{B}$ that $(ev_y\otimes h)\alpha_{\mathcal{H}}(f)=h((ev_y\otimes id)\alpha_{\mathcal{H}}(f))>0$. This leads to a contradiction to that $(ev_y\otimes h)\alpha_{\mathcal{H}}=(ev_x\otimes h)\alpha_{\mathcal{H}}$.
\end{proof}

\section{A concrete example}
Next, we give a concrete example of stabilizer subgroups.
\begin{theorem}\label{stabilizer under quantum permutation}
Let the quantum permutation group $A_s(n)$ act on $X_n=\{x_1,x_2,...,x_n\}$ by $\alpha$ such that
$$\alpha(e_i)=\sum_{j=1}^n e_j\otimes a_{ji}$$ for $1\leq i\leq n$. For any $x\in X_n$, the stabilizer of $x$, denoted by $A_x$, is isomorphic to the quantum permutation group $A_s(n-1)$.
\end{theorem}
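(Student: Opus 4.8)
The plan is to fix the point, compute the defining ideal of its stabilizer explicitly, and then identify the quotient $\mathcal{H}_0=A_s(n)/I_{x}$ with $A_s(n-1)$ by exhibiting mutually inverse morphisms through the universal property of the quantum permutation groups. By the symmetry of the magic unitary relations I may assume $x=x_n$. Writing $B=C(X_n)$ with $e_i(x_n)=\delta_{in}$, I first compute $(ev_{x_n}\otimes id)\alpha(e_i)=\sum_j e_j(x_n)a_{ji}=a_{ni}$, while $e_i(x_n)=\delta_{in}$. Hence the ideal $I_Y$ of the previous section, for $Y=\{x_n\}$, is generated by $a_{nn}-1$ together with the $a_{ni}$ for $i\neq n$; equivalently, in $\mathcal{H}_0$ the last row of the magic unitary collapses to $a_{nn}=1$ and $a_{ni}=0$ for $i\neq n$.

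The next step is to propagate these relations using the magic unitary axioms. Since the $n$-th column sums to $1$, one has $\sum_k a_{kn}=1$, and because $a_{nn}=1$ in $\mathcal{H}_0$ the remaining entries satisfy $\sum_{k\neq n}a_{kn}=0$; as these are projections, hence positive, each $a_{kn}=0$ for $k\neq n$. Thus both the last row and the last column of $(a_{ij})$ vanish off the corner in $\mathcal{H}_0$. A short check of the surviving row and column sums and of mutual orthogonality then shows that the truncated family $(a_{ij})_{1\leq i,j\leq n-1}$ is again a magic unitary, so by the universal property of $A_s(n-1)$ there is a surjective $*$-homomorphism $\psi:A_s(n-1)\to\mathcal{H}_0$ sending the standard generators $b_{ij}$ to $a_{ij}+I_Y$.

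For the converse I would construct an inverse directly. Define $\phi:A_s(n)\to A_s(n-1)$ on generators by $\phi(a_{ij})=b_{ij}$ for $i,j\leq n-1$, by $\phi(a_{nn})=1$, and by $\phi(a_{in})=\phi(a_{ni})=0$ for $i\neq n$. These images form an $n\times n$ magic unitary, namely the $(n-1)$-block $(b_{ij})$ with an isolated $1$ in the corner, so universality of $A_s(n)$ makes $\phi$ a well-defined morphism. Since $\phi$ annihilates each generator $a_{ni}-\delta_{in}1$ of $I_Y$, it descends to $\bar\phi:\mathcal{H}_0\to A_s(n-1)$. Comparing on generators gives $\bar\phi\psi=id$ and $\psi\bar\phi=id$, so $\mathcal{H}_0\cong A_s(n-1)$ as C*-algebras. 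Finally, because the $k=n$ summand in $\Delta(a_{ij})=\sum_k a_{ik}\otimes a_{kj}$ dies in the quotient for $i,j\leq n-1$, the induced coproduct $\Delta_0$ restricts to $\sum_{k\leq n-1}(a_{ik}+I_Y)\otimes(a_{kj}+I_Y)$, which is exactly the image of the coproduct of $A_s(n-1)$; hence the isomorphism intertwines the two coproducts and is an isomorphism of compact quantum groups.

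I expect the only genuine subtlety to lie in the positivity argument of the second step, namely that a sum of projections equal to $1$ forces every summand to vanish once one of them equals $1$. This is precisely what upgrades the ``last row'' relations coming from $I_Y$ to the full ``last row and last column'' relations needed to truncate the magic unitary cleanly; once that is in place, both the surjection and its inverse are formal consequences of the universal properties, and the coproduct compatibility is an immediate calculation.
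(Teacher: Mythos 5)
Your proof is correct, and its overall architecture matches the paper's: compute the ideal $I_n$ from the basis $\{e_i\}$, then build mutually inverse $*$-homomorphisms between $A_s(n)/I_n$ and $A_s(n-1)$ via the universal properties (your $\phi$ is exactly the paper's map $S$, sending $a_{ij}\mapsto b_{ij}$ for $i,j\le n-1$ and $a_{ij}\mapsto\delta_{ij}1$ otherwise). The genuine difference is how you kill the last \emph{column}. The paper's Lemma~\ref{stab lemma} does this Hopf-algebraically: the quotient map $\pi$ is a morphism of compact quantum groups, so by Proposition~\ref{antipodle is preserved} it intertwines the antipodes; since $\kappa(a_{in})=a_{ni}\in I_n$ and $\kappa_n$ is injective on the Hopf $*$-subalgebra of the quotient, one gets $\pi(a_{in})=0$. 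You instead argue purely C*-algebraically: in the quotient $a_{nn}$ becomes $1$ while the $n$-th column still sums to $1$, so $\sum_{k\neq n}a_{kn}=0$ is a vanishing sum of projections, forcing each $a_{kn}=0$ (indeed $0\le a_{kn}\le \sum_{j\neq n}a_{jn}=0$). Your route is more elementary: it needs neither Wang's structure-preservation result nor injectivity of the antipode, only the order structure of a C*-algebra, and the same trick applies whenever one entry of a magic unitary becomes $1$ in a quotient. What the paper's antipode argument buys is a different kind of robustness: it makes no use of positivity of the generators, so it would adapt to stabilizer-type ideals in quantum groups whose generators are not projections, provided the antipode is well behaved. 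One further small improvement on your side: you verify explicitly that the isomorphism intertwines the coproducts, a point the paper leaves implicit in asserting $\Psi=\Phi^{-1}$, and which is what upgrades a C*-algebra isomorphism to an isomorphism of compact quantum groups.
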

Before proceeding to prove the result, we first recall some facts from~\cite[Proposition 2.5]{Wang1995} about morphisms between compact quantum groups.
\begin{proposition}\label{antipodle is preserved}
Let $\Psi: A\to B$ be a morphism of compact quantum groups. That is, $(\Psi\otimes\Psi)\Delta_A=\Delta_B\Psi$. Then we have that $\Psi$ preserves the Hopf $*$-algebra structures. Namely,

$\Psi(\mathscr{A})\subseteq \mathscr{B}$, \, $\kappa_B\Psi=\Psi\kappa_A$, \, and $\varepsilon_B\Psi=\varepsilon_A$,
where for instance, $\kappa_A$ is the antipode and $\varepsilon_A$ is the counit on $\mathscr{A}$.
\end{proposition}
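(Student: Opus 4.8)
The plan is to pass to the dense Hopf $*$-algebras $\mathscr{A}$ and $\mathscr{B}$, each of which is linearly spanned by the matrix coefficients of finite-dimensional unitary corepresentations, and to observe that applying $\Psi$ entrywise sends corepresentations of the quantum group with C*-algebra $A$ to corepresentations of the one with C*-algebra $B$. This single observation yields all three assertions.

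First I would fix a finite-dimensional unitary corepresentation $u=(u_{ij})\in M_n(A)$, so that $\Delta_A(u_{ij})=\sum_k u_{ik}\otimes u_{kj}$ and $\sum_k u_{ik}u_{jk}^{*}=\delta_{ij}1=\sum_k u_{ki}^{*}u_{kj}$, and set $v:=(\Psi(u_{ij}))$. Since $\Psi$ is a unital $*$-homomorphism it preserves these unitarity relations, so $v$ is unitary; and applying the intertwining identity $(\Psi\otimes\Psi)\Delta_A=\Delta_B\Psi$ to each $u_{ij}$ gives $\Delta_B(\Psi(u_{ij}))=\sum_k\Psi(u_{ik})\otimes\Psi(u_{kj})$, so $v$ is a corepresentation. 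Hence every $\Psi(u_{ij})$ is a matrix coefficient of a finite-dimensional corepresentation and therefore lies in $\mathscr{B}$; since such coefficients span $\mathscr{A}$, linearity gives $\Psi(\mathscr{A})\subseteq\mathscr{B}$.

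With this containment secured, I would verify the remaining two identities on the same spanning set, using the representation-theoretic characterizations of the counit and antipode valid for any compact quantum group: $\varepsilon(u_{ij})=\delta_{ij}$ and $\kappa(u_{ij})=u_{ji}^{*}$. Applying these to $u$ in $\mathscr{A}$ and to $v$ in $\mathscr{B}$ gives $\varepsilon_B(\Psi(u_{ij}))=\delta_{ij}=\varepsilon_A(u_{ij})$ and, since $\Psi$ is a $*$-homomorphism, $\kappa_B(\Psi(u_{ij}))=\Psi(u_{ji})^{*}=\Psi(u_{ji}^{*})=\Psi(\kappa_A(u_{ij}))$. Both $\varepsilon_B\Psi=\varepsilon_A$ and $\kappa_B\Psi=\Psi\kappa_A$ then extend from the matrix coefficients to all of $\mathscr{A}$ by linearity.

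The only genuinely delicate point is the well-definedness of the characterizing formulas $\varepsilon(u_{ij})=\delta_{ij}$ and $\kappa(u_{ij})=u_{ji}^{*}$, namely that they depend only on the resulting element of the Hopf $*$-algebra and not on the chosen corepresentation or basis; this is precisely Woronowicz's structure theory for the Hopf $*$-algebra underlying a compact quantum group, which I would invoke to guarantee that $\varepsilon$ and $\kappa$ exist on the dense subalgebra and are determined on matrix coefficients by these formulas. A minor secondary point is that $v$ need not be irreducible, but every finite-dimensional corepresentation decomposes into irreducibles, so its matrix coefficients lie in $\mathscr{B}$ in any case, and no irreducibility is needed in the argument.
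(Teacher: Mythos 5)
Your proof is correct, and it fills in an argument the paper itself never gives: the paper simply quotes this proposition from Wang's 1995 paper (Proposition 2.5) without proof. Your corepresentation-coefficient argument --- pushing a unitary corepresentation $u$ forward to $v=(\Psi(u_{ij}))$ and invoking the standard formulas $\varepsilon(u_{ij})=\delta_{ij}$, $\kappa(u_{ij})=u_{ji}^{*}$ from Woronowicz's theory --- is essentially the original proof in the cited source, with the well-definedness and non-irreducibility caveats correctly handled.
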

For convenience, let $x=x_n$ and $I_n$ be the ideal of $A_s(n)$ generated by $\{(ev_n\otimes id)\alpha(f)-f(x_n)1|f\in B\}$. If we choose $f$ to be $e_i$ for $1\leq i\leq n$, then we have $I_n$ is the ideal generated by $a_{ni}$'s and $a_{nn}-1$ for all $1\leq i\leq n-1$. Furthermore, we have
\begin{lemma}\label{stab lemma}
For all $1\leq i\leq n-1$, $a_{in}$'s are in $I_n$.
\end{lemma}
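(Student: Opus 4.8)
The plan is to work inside the quotient $C^*$-algebra $A_s(n)/I_n$ and to exploit the fact that the generators $a_{ij}$ form a magic unitary: each $a_{ij}$ is a projection, and each row and each column of the matrix $(a_{ij})$ sums to $1$. Write $\pi_n\colon A_s(n)\to A_s(n)/I_n$ for the quotient map. First I would record which relations survive in the quotient. Since, as noted just before the statement, $I_n$ is generated by the $a_{ni}$ with $i<n$ together with $a_{nn}-1$, the image of the last row under $\pi_n$ is $(0,\dots,0,1)$; in particular $\pi_n(a_{nn})=1$.

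Next I would bring in the column-sum relation for the last column, $\sum_{k=1}^n a_{kn}=1$, which holds in $A_s(n)$ by the defining relations of the quantum permutation group. Subtracting the surviving relation $a_{nn}=1 \pmod{I_n}$ yields
\[
\sum_{k=1}^{n-1} a_{kn}=1-a_{nn}=-(a_{nn}-1)\in I_n,
\]
so the sum of the off-diagonal entries of the last column lies in $I_n$, i.e.\ $\sum_{k<n}\pi_n(a_{kn})=0$.

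The key step is then a positivity argument. Because $I_n$ is a Woronowicz $C^*$-ideal (it is the stabilizer ideal of the one-point set $\{x_n\}$, shown to be a closed self-adjoint ideal earlier), the quotient $A_s(n)/I_n$ is a genuine $C^*$-algebra, and each $\pi_n(a_{kn})$ is a projection, hence a positive element. A finite sum of positive elements in a $C^*$-algebra vanishes only if every summand vanishes, since $0\le \pi_n(a_{kn})\le \sum_{j<n}\pi_n(a_{jn})=0$. Applying this forces $\pi_n(a_{kn})=0$, that is $a_{kn}\in I_n$, for every $k\le n-1$, which is exactly the claim.

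The main obstacle, such as it is, is not algebraic but conceptual bookkeeping: one must be careful that the passage from ``the sum lies in $I_n$'' to ``each summand lies in $I_n$'' is justified, and this is precisely where the $C^*$-structure — positivity of projections together with preservation of the order under $\pi_n$ — is essential rather than mere ring theory. An alternative route via the antipode $\kappa$, which transposes the magic unitary ($\kappa(a_{ij})=a_{ji}$) and would turn the last-row relations directly into last-column relations, is in principle available through Proposition~\ref{antipodle is preserved}; but since $\kappa$ is only an anti-homomorphism and need not descend to $A_s(n)/I_n$, the positivity argument above is both shorter and cleaner.
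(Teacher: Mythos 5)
Your proof is correct, but it takes a genuinely different route from the paper. The paper argues through the antipode: since $I_n$ is a Woronowicz $C^*$-ideal, the quotient map $\pi\colon A_s(n)\to A_s(n)/I_n$ is a morphism of compact quantum groups, so by Proposition~\ref{antipodle is preserved} it intertwines the antipodes; from $\kappa(a_{in})=a_{ni}$ and $\pi(a_{ni})=0$ one gets $\kappa_n\pi(a_{in})=0$, and injectivity of $\kappa_n$ on the Hopf $*$-subalgebra of the quotient forces $\pi(a_{in})=0$. You instead use the column-sum relation $\sum_{k=1}^n a_{kn}=1$ of the magic unitary together with $a_{nn}-1\in I_n$ to conclude $\sum_{k<n}\pi_n(a_{kn})=0$, and then the positivity of the projections $\pi_n(a_{kn})$ to kill each summand. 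Your argument is more elementary in a concrete sense: it needs only that $I_n$ is a closed two-sided ideal (so that the quotient is a $C^*$-algebra and $\pi_n$ a $*$-homomorphism), not the full Woronowicz-ideal structure, the Hopf $*$-algebra machinery, or Wang's morphism theorem; what the paper's route buys is independence from positivity and from the specific magic-unitary relations, so it would survive in situations where the generators are not projections but the antipode still transposes indices. One small correction to your closing remark: the worry that $\kappa$ ``need not descend to $A_s(n)/I_n$'' is not actually an obstacle here --- Proposition~\ref{antipodle is preserved} guarantees exactly that the quotient map intertwines $\kappa$ with $\kappa_n$ on the Hopf $*$-subalgebras, and this compatibility (together with injectivity of the antipode there) is precisely the paper's proof.
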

\begin{proof}
Let $\pi$ be the quotient map from $A_s(n)$ onto $A_s(n)/I_n$, which is a morphism between compact quantum groups. Let $\kappa$ and $\kappa_n$ be the corresponding antipodes of $A_s(n)$ and $A_s(n)/I_n$. By Proposition~\ref{antipodle is preserved}, we have that for  any $1\leq i\leq n-1$, $$\kappa_n\pi(a_{in})=\pi\kappa(a_{in})=\pi(a_{ni})=0.$$ Note that $a_{in}$ is in the Hopf $*$-subalgebra of $A_s(n)$. Hence Proposition~\ref{antipodle is preserved} tells us that $\pi(a_{in})$ is in the Hopf $*$-subalgebra of $A_s(n)/I_n$ on which $\kappa_n$ is injective. So  $\pi(a_{in})=0$, which proves that
$a_{in}\in I_n$ for all $1\leq i\leq n-1$.
\end{proof}
Now we are ready to prove Theorem~\ref{stabilizer under quantum permutation}.
\begin{proof}
Let $\{b_{ij}\}_{1\leq i,j\leq n-1}$ be the set of generators of $A_s(n-1)$. By the universality of $A_s(n)$, there exists a $*$-homomorphism $S: A_s(n)\to A_s(n-1)$ such that $S(a_{ij})=b_{ij}$ for $1\leq i,j\leq n-1$, otherwise $S(a_{ij})=\delta_{ij}$. Since $a_{ni}$'s and $a_{nn}-1$ are in $\ker{S}$ for all $1\leq i\leq n-1$, it follows  that $I_n\subseteq \ker{S}$. Therefore, naturally $S$ induces a $*$-homomorphism $\Phi:A_s(n)/I_n\to A_s(n-1)$.

On the other hand, by the universality of $A_s(n-1)$ and Lemma~\ref{stab lemma}, there exists a $*$-homomorphism $\Psi: A_s(n-1)\to A_s(n)/I_n$ such that $\Psi(b_{ij})=a_{ij}+I_n$. It is easy to check that $\Psi=\Phi^{-1}$, which proves that $\Phi$ is an isomorphism and ends the proof.
\end{proof}
\begin{remark}
\begin{enumerate}
\item In fact, the quantum subgroup stabilizing a character $\chi$ is already introduced by C. Pinzari~\cite[Theorem 7.3]{Pinzari2007} without assuming the boundedness of the counit. However, this assumption could not be omitted since we need this to guarantee that the ideal generated by $(\chi\otimes id)\alpha(b)-\chi(b)1_A$ is not $A$.
\item Theorem~\ref{stabilizer under quantum permutation} is first stated (without proof) by S. Wang in~\cite[Concluding remarks (2) preceding Appendix]{Wang1998}.
\end{enumerate}
\end{remark}

\end{document}